\documentclass[12pt]{amsart}
\pdfoutput=1
\usepackage[margin=1in]{geometry}
\usepackage[utf8]{inputenc}
\usepackage{graphicx}
\usepackage{amsmath,amssymb}
\usepackage{amsthm}
\usepackage{color}
\usepackage{tikz}
\usepackage[utf8]{inputenc}
\usepackage{pgfplots}
\pgfplotsset{compat=1.18}
\usetikzlibrary{fillbetween}
\allowdisplaybreaks

\makeatletter
\def\l@section{\@tocline{1}{12pt plus2pt}{0pt}{}{\bfseries}}
\def\l@subsection{\@tocline{2}{0pt}{2pc}{2pc}{}}
\makeatother
\makeatletter
\def\subsection{\@startsection{subsection}{2}{\z@}%
	{-3.25ex\@plus -1ex \@minus -.2ex}%
	{1.5ex \@plus .2ex}%
	{\normalfont\bfseries\boldmath}}
\def\subsubsection{\@startsection{subsubsection}{3}%
	\z@{.5\linespacing\@plus.7\linespacing}{-.5em}%
	{\normalfont\bfseries\boldmath}}
\renewcommand\paragraph{\@startsection{paragraph}{4}{\z@}%
	{3.25ex \@plus1ex \@minus.2ex}%
	{-1em}%
	{\normalfont\normalsize\bfseries}}
\makeatother

\theoremstyle{plain}
\newtheorem{thm}{Theorem}[section]

\newtheorem{cor}[thm]{Corollary}
\newtheorem{lem}[thm]{Lemma}
\newtheorem{prop}[thm]{Proposition}
\newtheorem{alg}[thm]{Algorithm}
\theoremstyle{definition}

\theoremstyle{remark}

\theoremstyle{plain}

\numberwithin{equation}{section}

\newtheorem{opques}[thm]{Open Question}

\theoremstyle{plain} 
\newcommand{\thistheoremname}{}
\newtheorem{genericthm}[thm]{\thistheoremname}

  \newtheorem*{genericthm*}{\thistheoremname}
\newenvironment{namedthm*}[1]
  {\renewcommand{\thistheoremname}{#1}%
   \begin{genericthm*}}
  {\end{genericthm*}}

\newcommand{\D}{{\mathbb D}}

\newcommand{\R}{{\mathbb R}}

\newcommand{\C}{{\mathbb C}}

\newcommand{\Z}{{\mathbb Z}}
\newcommand{\calR}{{\mathcal R}}

\newcommand{\calB}{{\mathcal B}}

\newcommand{\supp}{{\textnormal{supp}}}

\makeatletter
\newcommand{\vast}{\bBigg@{4}}
\newcommand{\Vast}{\bBigg@{5}}
\makeatother

\def\udot#1{\ifmmode\oalign{$#1$\crcr\hidewidth.\hidewidth
    }\else\oalign{#1\crcr\hidewidth.\hidewidth}\fi}

\def\R{\mathbb{R}}
\def\Z{\mathbb{Z}}
\def\T{\mathbb{T}}
\def\C{\mathbb{C}}

\def\beq{\begin{equation}}
\def\eeq{\end{equation}}
\makeatletter
\newcommand{\doublewidetilde}[1]{{%
  \mathpalette\double@widetilde{#1}%
}}
\newcommand{\double@widetilde}[2]{%
  \sbox\z@{$\m@th#1\widetilde{#2}$}%
  \ht\z@=.9\ht\z@
  \widetilde{\box\z@}%
}
\makeatother

\usepackage{tikz}

\makeatletter
\def\@makefnmark{%
  \leavevmode
  \raise.9ex\hbox{\fontsize\sf@size\z@\normalfont\tiny\@thefnmark}}
\makeatother

\begin{document}
	
\title[]{On the growth of Bloch functions}

\author{Bingyang Hu}
\address{(Bingyang Hu) Department of Mathematics and Statistics\\
        Auburn University\\
        Auburn, Alabama, U.S.A, 36849}
\email{bzh0108@auburn.edu}

\author{Jie Xiao}
\address{(Jie Xiao) Department of Mathematics and Statistics\\
        Memorial University of Newfoundland\\
        St. John's, NL, Canada, A1C 5S7}
\email{jxiao@mun.ca}

\author{Xiaojing Zhou}
\address{(Xiaojing Zhou) Department of Mathematics and Statistics\\
         Auburn University\\
         Auburn, Alabama, U.S.A, 36849}
\email{xiz0003@auburn.edu}

\begin{abstract}
We prove that there exist two Bloch functions $f_1$ and $f_2$ on $\D$ such that
$$
|f_1(z)|+|f_2(z)|
\geq
\left(\log\frac{1}{1-|z|}\right)^{1/2},
\qquad z\in\D,
$$
thereby resolving an open problem posed in 2008 by Girela, Pel\'aez, P\'erez-Gonz\'alez and R\"atty\"a. Our proof is based on a new Szeg\H{o}-type recursion involving $\C^2$-valued polynomials and their reciprocal polynomials.
\end{abstract}

\date{\today}
\subjclass[2020]{30H30, 42C05}

\keywords{Bloch space, growth problem, Szeg\H{o} recursion, reciprocal polynomials}

\maketitle

\section{Introduction}

Let $\D$ denote the unit disc in $\C$, let $\T:=\partial\D$ be the unit circle, and let $H(\D)$ denote the space of all holomorphic functions on $\D$, equipped with the compact--open topology.

The main \emph{goal} of this paper is to resolve the following open problem in the theory of Bloch spaces, posed by Girela, Pel\'aez, P\'erez-Gonz\'alez and R\"atty\"a \cite{GirelaPelaezPerezGonzalezRattya2008} in 2008.

\begin{opques} \label{20260724op01}
Do there exist two functions $f_1, f_2 \in \calB$ such that
\begin{equation} \label{20260724eq01}
|f_1(z)|+|f_2(z)| \ge \left(\log \frac{1}{1-|z|} \right)^{1/2}, \qquad z \in \D. 
\end{equation} 
\end{opques}

We first recall the following very nice observation due to Girela, Pel\'aez, P\'erez-Gonz\'alez and R\"atty\"a \cite{GirelaPelaezPerezGonzalezRattya2008}. The factor
$\left(\log \frac{1}{1-|z|}\right)^{1/2}$ in \eqref{20260724eq01} is sharp in the following sense: if $f_1, f_2 \in \calB$ and $\Phi:[0,1)\to[0,\infty)$ satisfies
\begin{equation}\label{20260724eq02}
    |f_1(z)|+|f_2(z)|\geq \Phi(|z|),
    \qquad z\in\D,
\end{equation}
then
$$
\Phi(r)\lesssim
\left(\log\frac{e}{1-r}\right)^{1/2},
\qquad 0\leq r<1.
$$
Indeed, taking the contour integral on both sides of \eqref{20260724eq02} and applying the Cauchy--Schwarz inequality, we obtain, for every $0<r<1$,
$$
\begin{aligned}
\Phi(r)
&\leq
\sum_{\ell=1}^2
\frac{1}{2\pi}
\int_0^{2\pi}
|f_\ell(re^{it})|\,dt \\
&\leq
\sum_{\ell=1}^2
\left(
\frac{1}{2\pi}
\int_0^{2\pi}
|f_\ell(re^{it})|^2\,dt
\right)^{1/2}
\lesssim
\left(\log\frac{e}{1-r}\right)^{1/2}.
\end{aligned}
$$
This proves the assertion. In the last estimate, we used
\cite[(1.4)]{GirelaPelaezPerezGonzalezRattya2008}; see also \cite{ClunieMacGregor1984,Makarov1985}.

\vspace{0.1cm}

It therefore remains to determine whether there exists a pair of functions $f_1$ and $f_2$ for which the threshold
$\left(\log \frac{1}{1-|z|}\right)^{1/2}$ is attained. Our main theorem shows that the answer to Open Question~\ref{20260724op01} is \emph{affirmative}.

\begin{thm} \label{Blochmainthm}
There exist two functions $f_1, f_2 \in \calB$ such that
\begin{equation} \label{20260725eq20}
|f_1(z)|+|f_2(z)| \ge \left(\log \frac{1}{1-|z|} \right)^{1/2}, \qquad z \in \D. 
\end{equation} 
\end{thm}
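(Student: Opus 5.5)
We will build $f_1,f_2$ as lacunary-type series whose Taylor blocks are $\C^2$-valued polynomials with \emph{pointwise constant modulus} on $\T$, in the spirit of Rudin--Shapiro pairs. The target is
$$
F=(f_1,f_2)=\sum_{k\ge 1} z^{n_k}\,\mathbf P_k(z),
$$
with $n_k=2^{k}$ (or a faster geometric sequence) and $\mathbf P_k=(P_k,Q_k)$ polynomials of degree $<n_k$.

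The key requirement is that on $\T$ we have $|P_k|^2+|Q_k|^2\equiv 2$ (or another constant). We also want successive blocks to be \emph{orthogonal in $\C^2$ pointwise in a cumulative sense}, so that partial sums satisfy
$$
|S_N(e^{it})|^2=|P|^2+|Q|^2\asymp N \quad\text{on } \T .
$$
This is exactly what the Szeg\H{o}-type recursion should deliver. Starting from $(P_0,Q_0)=(1,1)$, we set
$$
P_{k+1}=P_k+z^{m_k}Q_k,\qquad Q_{k+1}=P_k-z^{m_k}Q_k
$$
(the Rudin--Shapiro recursion). To obtain the square-root growth of the partial sums rather than the doubling of norms that this gives, we instead use the reciprocal polynomials $Q^*(z)=z^{d}\overline{Q(1/\bar z)}$ and normalize. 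The vector recursion is
$$
\mathbf S_{k+1}=\mathbf S_k+z^{n_{k}}\,\mathbf U_k,
$$
where $\mathbf U_k$ has $|\mathbf U_k|\equiv1$ on $\T$ and $\mathbf U_k\perp\mathbf S_k$ pointwise on $\T$. In $\C^2$, the choice $\mathbf U_k=(-\overline{S_k^{(2)}},\overline{S_k^{(1)}})/|\mathbf S_k|$ is analytic after multiplying by $z^{\deg}$, because $\bar z=1/z$ on $\T$ turns conjugates into reciprocal polynomials. The normalization by $|\mathbf S_k|$ is the difficulty. We handle it by keeping $|\mathbf S_k|^2\equiv c_k$ constant on $\T$ inductively, with $c_{k+1}=c_k+1$. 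The Pythagorean identity keeps this exact, and the orthogonal complement of a constant-modulus vector polynomial is again a vector polynomial, namely $z^{d}(-S^{(2)*},S^{(1)*})/\sqrt{c_k}$. Hence $|\mathbf S_N|=\sqrt N$ on $\T$.

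Next we must verify three things.

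\emph{(i) Bloch property.} Each block $z^{n_k}\mathbf U_k$ has degree in $[n_k,n_{k+1})$ and sup norm $1$. With $n_{k+1}/n_k$ bounded, the standard block criterion (sup norms of dyadic blocks bounded implies Bloch) gives $f_1,f_2\in\calB$. We must arrange the degree bookkeeping so that $\deg \mathbf U_k< n_{k+1}-n_k$; since $\mathbf U_k$ has degree $\le n_k$ and $n_k=2^k$, this fits.

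\emph{(ii) Lower bound on $\T_r$.} For $|z|=r$ with $1-r\approx 2^{-N}$, the tail $\sum_{k>N}$ is $O(1)$ by the Bloch-type geometric decay of $r^{n_k}$. The head is $\mathbf S_N(z)$ minus a correction from the factors $r^{n_k}$, $k\le N$. Those factors are nearly $1$ except for the last $O(1)$ blocks. We therefore need $|\mathbf S_N(z)|\gtrsim\sqrt N$ inside the disc, not only on $\T$.

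\emph{(iii) Interior lower bound.} This is the main obstacle. A constant modulus on $\T$ does not prevent interior zeros of a $\C^2$-valued function, and the modulus can dip strictly inside. We plan to control $\mathbf S_N(re^{it})$ via the Poisson/Cauchy representation $\mathbf S_N(r\zeta)-\mathbf S_N(\zeta)$, which is bounded by $\sum_k (1-r^{n_k})$. For $n_k\le 2^N$ this sum is $O(1)$, since $\sum_{k\le N}2^{k-N}\le 2$. Combined with $|\mathbf S_N(\zeta)|=\sqrt N$, this gives $|F(z)|\ge\sqrt N-C$ for $1-|z|\approx2^{-N}$. Since $|f_1|+|f_2|\ge|F|$, we get a bound $\gtrsim (\log\frac1{1-|z|})^{1/2}$ for $|z|$ near $1$.

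Finally, the constant must be adjusted to exactly $1$. Near the boundary we multiply $F$ by a large constant $M$; for $|z|$ small, where $\log\frac1{1-|z|}$ is small, we add a constant vector component. If necessary we also take $\mathbf S_0$ of large modulus, e.g. $c_0=M$ large, so that $|F|\ge\sqrt{N+M}-C\ge\sqrt{N}\cdot\sqrt{\log 2}$ everywhere.
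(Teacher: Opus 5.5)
Your construction and boundary estimates are the paper's. Algorithm~\ref{20260724alg01} is your recursion with $n_k=2^k$, reciprocals taken with respect to $d=n_k-1$, and normalization by $\sqrt{c_k}$. The block/Bernstein Bloch argument is Proposition~\ref{20260724prop03}, and your comparison of $F(r\zeta)$ with $\mathbf S_N(\zeta)$ is Proposition~\ref{20260724prop04}. Two repairs are needed there:
\begin{itemize}
\item Drop the extra factor $z^d$ in your formula for $\mathbf U_k$. Since $S^*$ already contains $z^d$, the extra factor doubles the degree and breaks the dyadic bookkeeping; you want $\deg\mathbf U_k\le n_k-1$, not $\le n_k$.
\item The head difference $\mathbf S_N(r\zeta)-\mathbf S_N(\zeta)$ is not literally bounded by $\sum_k(1-r^{n_k})$, because $\mathbf U_k(r\zeta)\neq\mathbf U_k(\zeta)$. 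Apply Bernstein's inequality (Lemma~\ref{20260724lem01}) to each block separately to get $\lesssim(1-r)\sum_{k<N}2^{k}\lesssim 1$.
\end{itemize}

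The genuine difference is the interior. The paper starts from $\mathbf F_0=(1,0)$ and obtains the lower bound only for $|z|\ge r_0$ (Theorem~\ref{20260724thm01}). It then needs Lemma~\ref{20260724lem02}, a Hausdorff-dimension argument giving a small $a\in\C^2$ with $\mathbf F+a$ zero-free on $\D_\rho$, followed by compactness and a rescaling.

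Your choice of a large constant initial vector, $\mathbf S_0=(\sqrt M,0)$, replaces all of this. It is this step that carries your last paragraph, not adding a constant to an already-built $F$, which by itself would not exclude zeros in an intermediate annulus. Make explicit why it works:
\begin{itemize}
\item The blocks still have unit modulus on $\T$ and sup norm at most $1$ on $\D$. Hence the constant $C$ in $\|F(z)\|_{\C^2}\ge\sqrt{N+M}-C$ for $1/2<|z|<1$ is independent of $M$.
\item For $|z|\le 1/2$ you have $\|F(z)-\mathbf S_0\|_{\C^2}\le\sum_{k\ge0}2^{-2^k}<1$, so $\|F(z)\|_{\C^2}\ge\sqrt M-1$.
\item On $2^{-N-1}<1-|z|\le 2^{-N}$ the target is $\sqrt{(N+1)\log 2}$, not $\sqrt{N\log 2}$. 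This is still fine, since $\inf_{N\ge1}\bigl(\sqrt{N+M}-\sqrt{(N+1)\log 2}\bigr)\ge\sqrt{(M-1)(1-\log 2)}$.
\end{itemize}
So any $M\ge 4+C^2/(1-\log 2)$ gives the theorem with constant exactly $1$.

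Your route is fully quantitative and avoids the genericity lemma. The paper's route keeps the canonical normalization $\|\mathbf F_N\|^2=N+1$ and trades an explicit interior bound for a soft zero-avoidance argument that applies to any holomorphic $\C^2$-valued map.
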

Here, we recall that the Bloch space $\calB$ consists of all holomorphic functions $f \in H(\D)$ such that
$$
\|f\|_{\calB, *}:=\sup_{z \in \D} (1-|z|^2)|f'(z)|<+\infty. 
$$
It is well known that $\|\cdot\|_{\calB,*}$ defines a semi--norm on $\calB$, and that $\calB$ is a Banach space under the norm $\|f\|_{\calB}:=|f(0)|+\|f\|_{\calB,*}$. 

\vspace{0.1cm}

The estimate \eqref{20260725eq20} in Theorem~\ref{Blochmainthm} belongs to the general class of \emph{growth problems} in complex function theory, which ask whether finitely many functions in a given holomorphic function space can jointly attain its maximal pointwise growth. Such problems are well understood for the Bergman-type growth spaces
$$
A^{\alpha}
:=
\left\{
f\in H(\D):
\sup_{z\in\D}(1-|z|^2)^\alpha |f(z)|<\infty
\right\},
\qquad \alpha>0,
$$
and their weighted generalizations; see Ramey and Ullrich
\cite{RameyUllrich1991} and Abakumov and Doubtsov
\cite{AbakumovDoubtsov2012,AbakumovDoubtsov2015}. This framework also includes the earlier result of the second author for the weighted Bloch spaces $\calB_\alpha$ when $\alpha>1$; see \cite[Lemma~3.1]{Xiao2004}. Indeed, in this range, $\calB_\alpha$ coincides with $A^{\alpha-1}$, with equivalent norms.

The constructions in these works are based on lacunary series, including weakly lacunary series in the general weighted setting. This method, however, breaks down for Bloch space. Indeed, a Hadamard gap series belongs to $\calB$ precisely when its coefficients are bounded (see, e.g., \cite{Yamashita1980}), whereas the classical lacunary construction for an increasing radial weight requires unbounded coefficients. Consequently, it cannot produce the square-root logarithmic growth required in Theorem~\ref{Blochmainthm}.

\vspace{0.1cm}

Our proof instead uses a new \emph{Szeg\H{o}-type recursive construction} for $\C^2$-valued polynomials and their reciprocal polynomials, inspired by the Szeg\H{o} recursion from the theory of orthogonal polynomials on the unit circle (OPUC); see Simon's expository paper \cite{Simon2005}. 

The rest of the paper is organized as follows. In Section~\ref{Sec02}, we develop a recursive construction of $\C^2$-valued reciprocal polynomials and show that its limiting coordinates define two Bloch functions. In Section~\ref{Sec03}, we prove that these functions satisfy the desired lower bound, first near $\T$ and then throughout $\D$, thereby completing the proof of Theorem~\ref{Blochmainthm}.

Finally, throughout the paper, for nonnegative quantities $a$ and $b$, we write
$a\lesssim b$ if $a\leq Cb$ for some constant $C>0$ independent of $a$ and $b$. We write
$a\simeq b$ if both $a\lesssim b$ and $b\lesssim a$ hold.
\\
\noindent{\bf Acknowledgement.}
The first author was supported by the NSF grant DMS-2555999 and by the Simons Travel grant MPS-TSM-00007213. The second author was supported by NSERC of Canada $\#$ 202979.

\bigskip 

\section{Szeg\H{o}-type recursion of vector-valued reciprocal polynomials} \label{Sec02}

\subsection{Vector-valued conjugate reciprocal polynomials}

For $u=\left(u_1,u_2\right),v=\left(v_1,v_2\right)\in\C^2$, recall that the Hermitian inner product is given by
$$
\langle u,v\rangle_{\C^2} :=u_1\overline{v_1}+u_2\overline{v_2}.
$$
The norm induced by this inner product is denoted by
$$
\|u\|_{\C^2}:=
\left(|u_1|^2+|u_2|^2\right)^{1/2}.
$$
Consider $\C^2$-valued polynomials of the form
$$
{\bf V}(z)=\left(V_1(z),V_2(z)\right)
=\sum_{k=0}^d {\bf v}_k z^k,
\qquad {\bf v}_k\in\C^2,
$$
for $d \in \Z_{\ge 0}$. Define the \emph{degree} and the
\emph{Fourier support} of ${\bf V}$, respectively, by
$$
\deg{\bf V}:=\max\left\{\deg V_1,\deg V_2\right\}
$$
and
$$
\supp\widehat{\left({\bf V}|_\T\right)}:= \left\{k \ge 0:{\bf v}_k\neq(0,0)\right\}.
$$
Here, the Fourier transform is understood as the Fourier transform from
$\T$ to $\Z$, namely,
$$
\widehat{\left({\bf V}|_\T\right)}(k)
:=
\frac{1}{2\pi}
\int_0^{2\pi}{\bf V}\left(e^{it}\right)e^{-ikt}\,dt
\in\C^2,
\qquad k\in\Z.
$$
Next, we recall the concept of reciprocal polynomials. Let $d \in \Z_{\ge 0}$ and $P(z)=\sum_{j=0}^d a_j z^j, \; a_j \in \C$ be a scalar--valued polynomial with $\deg P \le d$. Then define its associated \emph{reciprocal polynomial} $P^{\dagger, d}$ by
$$
P^{\dagger, d}(z):=z^d \overline{P \left( \overline{z}^{-1} \right)}=\overline{a_d}+\overline{a_{d-1}}z+ \dots +\overline{a_0}z^d.
$$
Moreover, for ${\bf F}=(P, Q)$ being a $\C^2$-valued polynomial with $\deg {\bf F} \le d$, define 
$$
\calR_d {\bf F}:=\left(-Q^{\dagger, d}, \; P^{\dagger, d} \right). 
$$
The following properties are immediate. For every $\zeta\in\T$, one has:
\begin{enumerate}
\item $P^{\dagger,d}(\zeta)=\zeta^d\overline{P(\zeta)}$, 
and hence
$$
\calR_d{\bf F}(\zeta)=\zeta^d\left(-\overline{Q(\zeta)},\overline{P(\zeta)}\right).
$$

\item
\begin{equation} \label{20250724eq51}
\left\|\calR_d{\bf F}(\zeta)\right\|_{\C^2}
=
\left\|{\bf F}(\zeta)\right\|_{\C^2},
\qquad\text{and}\qquad
\left\langle{\bf F}(\zeta), \; \calR_d{\bf F}(\zeta)\right\rangle_{\C^2}=0.
\end{equation} 
\end{enumerate}

\subsection{Szeg\H{o}-type recursion and Bloch estimates}

We now begin the proof of Theorem~\ref{Blochmainthm} by constructing the required pair of Bloch functions. 

\begin{alg} \label{20260724alg01}
\begin{enumerate}
\item [$\bullet$] When $n=0$, start with
$$
{\bf F}_0=(P_0, Q_0):=(1, 0). 
$$
\item [$\bullet$] Once ${\bf F}_n=(P_n, Q_n)$ is known, define 
$$
{\bf G}_{n+1}(z):=\frac{z^{2^n}}{\sqrt{n+1}} \calR_{2^n-1} {\bf F}_n(z)
$$
and 
$$
{\bf F}_{n+1}(z):={\bf F}_n+{\bf G}_{n+1}(z).  
$$
\end{enumerate} 
\end{alg}

Here, we emphasize that ${\bf G}_n$ is defined only for $n\geq 1$. The following proposition summarizes some important properties of the sequences $\{{\bf F}_n\}_{n\geq 0}$ and $\{{\bf G}_n\}_{n\geq 1}$.

\begin{prop} \label{20260724prop01}
For every $n\ge0$:
\begin{enumerate}
\item [(i)] $\deg {\bf F}_n\le 2^n-1$;
\item [(ii)] if $n\ge1$, then $\supp \; \widehat{{\bf G}_n}\subseteq \left\{2^{n-1}, \dots, 2^n-1 \right\}$;
\item [(iii)] for every $\zeta\in\T$,
$$
        \left\| {\bf F}_n(\zeta)\right\|_{\C^2}^2=n+1;
$$
\item [(iv)] for every $\zeta\in\T$,
$$
       \left\| {\bf G}_n(\zeta)\right\|_{\C^2}^2=1,
        \qquad
        \langle {\bf F}_n(\zeta), {\bf G}_{n+1}(\zeta)\rangle_{\C^2}=0.
$$
\end{enumerate}
\end{prop}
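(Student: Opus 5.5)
We argue by induction on $n$, proving (i)--(iv) together. The base case $n=0$ is immediate: ${\bf F}_0=(1,0)$ has degree $0=2^0-1$ and $\|{\bf F}_0(\zeta)\|_{\C^2}^2=1$. For (iv) at $n=0$, only the orthogonality $\langle {\bf F}_0(\zeta),{\bf G}_1(\zeta)\rangle_{\C^2}=0$ is needed, since ${\bf G}_n$ is defined only for $n\ge1$. This orthogonality is a special case of the general step below.

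For the inductive step, assume (i) and (iii) hold for ${\bf F}_n$. Since $\deg {\bf F}_n\le 2^n-1$, the reciprocal $\calR_{2^n-1}{\bf F}_n$ is a well-defined $\C^2$-valued polynomial of degree at most $2^n-1$; its Fourier support lies in $\{0,\dots,2^n-1\}$. Multiplying by $z^{2^n}$ shifts this support to $\{2^n,\dots,2^{n+1}-1\}$. This gives (ii) for ${\bf G}_{n+1}$. It also gives $\deg{\bf F}_{n+1}\le\max\{2^n-1,2^{n+1}-1\}=2^{n+1}-1$, which is (i) for $n+1$.

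For the unimodular identities, fix $\zeta\in\T$ and use the properties \eqref{20250724eq51}. Since $|\zeta^{2^n}|=1$,
$$
\|{\bf G}_{n+1}(\zeta)\|_{\C^2}^2=\frac{1}{n+1}\left\|\calR_{2^n-1}{\bf F}_n(\zeta)\right\|_{\C^2}^2=\frac{1}{n+1}\|{\bf F}_n(\zeta)\|_{\C^2}^2=1
$$
by (iii) for $n$. Moreover, pulling out the scalar factor gives
$$
\langle {\bf F}_n(\zeta),{\bf G}_{n+1}(\zeta)\rangle_{\C^2}=\frac{\overline{\zeta^{2^n}}}{\sqrt{n+1}}\,\left\langle {\bf F}_n(\zeta),\calR_{2^n-1}{\bf F}_n(\zeta)\right\rangle_{\C^2}=0.
$$
This establishes (iv); the orthogonality holds at every $n\ge0$, including $n=0$. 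By Pythagoras,
$$
\|{\bf F}_{n+1}(\zeta)\|_{\C^2}^2=\|{\bf F}_n(\zeta)\|_{\C^2}^2+\|{\bf G}_{n+1}(\zeta)\|_{\C^2}^2=(n+1)+1,
$$
which is (iii) for $n+1$.

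There is no real obstacle. The only point requiring care is that the degree bound (i) must be in hand before forming $\calR_{2^n-1}{\bf F}_n$, so that its coefficients, and hence its support, are as claimed. Concretely, $z^{2^n-1}\overline{P_n(\overline{z}^{-1})}$ must be a genuine polynomial. This is why (i) and (iii) are carried through the induction simultaneously, with (ii) and (iv) derived from them at each step.
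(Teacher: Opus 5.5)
Your proof is correct and follows essentially the same route as the paper: induction in which the degree bound makes $\calR_{2^n-1}{\bf F}_n$ a genuine polynomial, then the isometry and orthogonality properties of $\calR_d$ on $\T$, then Pythagoras. The only difference is cosmetic. The paper verifies $n=0$ and $n=1$ by computing ${\bf G}_1$ and ${\bf G}_2$ explicitly, whereas you correctly note that the base-case orthogonality is already covered by the general step.
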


\begin{proof}
We prove by induction. Note that when $n=0$, assertions (i) and (iii) are clear. Moreover, a direct computation yields ${\bf G}_1(z)=(0, z)$, and hence for any $\zeta \in \T$, 
$$
\left\langle {\bf F}_0(\zeta), {\bf G}_1(\zeta) \right\rangle_{\C^2}= \left\langle (1, 0), (0, \xi) \right \rangle_{\C^2}=0, 
$$
which gives the second assertion in (iv). We next consider the case $n=1$. Assertions (i) and (iii) are immediate from ${\bf F}_1(z)={\bf F}_0(z)+{\bf G}_1(z)=\left(1,z\right)$. Assertion (ii) and the first assertion in (iv) follow from the identity ${\bf G}_1(z)=\left(0,1\right)z$. Finally, to verify the second assertion in (iv), we first compute
$$
{\bf G}_2(z)=\left(-\frac{z^2}{\sqrt{2}},\frac{z^3}{\sqrt{2}}\right),
$$
and hence, for every $\zeta\in\T$,
$$
\left\langle{\bf F}_1(\zeta),{\bf G}_2(\zeta)\right\rangle_{\C^2}
=\left\langle\left(1,\zeta\right),\left(-\frac{\zeta^2}{\sqrt{2}},\frac{\zeta^3}{\sqrt{2}}\right)\right\rangle_{\C^2}
=-\frac{\overline{\zeta}^2}{\sqrt{2}}+\frac{\zeta\overline{\zeta}^3}{\sqrt{2}}=0.
$$
Now we assume the assertions (i)--(iv) hold for level $n$, and we have to prove (i)--(iv) for level $n+1$. First, we observe that since $\deg {\bf F}_n \le 2^n-1$, $\deg \calR_{2^n-1} {\bf F}_n \le 2^n-1$, and hence 
\begin{equation} \label{20260724eq50}
\deg {\bf F}_n<2^n \le \deg {\bf G}_n \le 2^{n+1}-1, 
\end{equation} 
which implies $\deg {\bf F}_{n+1}=\deg {\bf G}_{n+1} \le 2^{n+1}-1$. Hence, the assertion (i) is proved. Next, by the definition of ${\bf G}_{n+1}$, every $\C^2$-valued monomial appearing in ${\bf G}_{n+1}$ has degree at least $2^n$. This, together with \eqref{20260724eq50}, proves assertion (ii). Third, by the first equation in \eqref{20250724eq51}, we have for any $\xi \in \T$, 
\begin{equation} \label{20260724eq53}
\left\|{\bf G}_{n+1}(\xi) \right\|_{\C^2}^2=\frac{1}{n+1} \cdot \left\|\calR_{2^n-1} {\bf F}_n(\xi) \right\|^2_{\C^2}=\frac{1}{n+1} \cdot \left\|{\bf F}_n(\xi) \right\|_{\C^2}^2=1, 
\end{equation} 
where in the last estimate above, we used the induction hypothesis. This proves the first equation in assertion (iv). Moreover, by the second equation in \eqref{20250724eq51}, 
\begin{equation} \label{20260724eq54}
\left \langle {\bf F}_n(\xi), {\bf G}_{n+1}(\xi) \right\rangle_{\C^2}=\frac{\left(\overline{\xi}\right)^{2^n}}{\sqrt{n+1}} \cdot \left \langle {\bf F}_n(\xi), \calR_{2^n-1} {\bf F}_{n}(\xi) \right\rangle_{\C^2}=0, 
\end{equation} 
which gives the second assertion in (iv). Finally, by \eqref{20260724eq53} and \eqref{20260724eq54}, 
$$
\left\|{\bf F}_{n+1}(\xi) \right\|_{\C^2}^2=\left\|{\bf F}_{n}(\xi) \right\|_{\C^2}^2+\left\|{\bf G}_{n+1}(\xi) \right\|_{\C^2}^2=n+2, 
$$
which finishes the proof of assertion (iii). 
\end{proof}

As a consequence of Proposition \ref{20260724prop01}, we have the following. 

\begin{prop} \label{20260724prop02}
The series
$$
{\bf F}(z):={\bf F}_0(z)+\sum_{n=1}^{\infty} {\bf G}_n(z)=:(P(z),Q(z))
$$
converges uniformly on compact subsets of $\D$.  Hence $P,Q\in H(\D)$, and ${\bf F}_N\to {\bf F}$ uniformly on compact subsets of $\D$, where
$$
{\bf F}_N(z):={\bf F}_0(z)+\sum_{n=1}^N {\bf G}_n(z).
$$
\end{prop}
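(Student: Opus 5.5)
We plan to prove the uniform convergence on compact subsets by a Weierstrass $M$-test. The bound on each term will come from two facts in Proposition~\ref{20260724prop01}: the unimodularity on $\T$ in part (iv), and the Fourier support localization in part (ii).

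\emph{Step 1: coefficient bounds.} Fix $n\ge1$ and write ${\bf G}_n(z)=\sum_{k=2^{n-1}}^{2^n-1}{\bf g}_{n,k}z^k$, which is legitimate by (ii). Since $\|{\bf G}_n(\zeta)\|_{\C^2}=1$ for all $\zeta\in\T$ by (iv), Parseval applied to each coordinate gives
$$
\sum_{k}\|{\bf g}_{n,k}\|_{\C^2}^2=\frac{1}{2\pi}\int_0^{2\pi}\|{\bf G}_n(e^{it})\|_{\C^2}^2\,dt=1 .
$$
In particular $\|{\bf g}_{n,k}\|_{\C^2}\le1$ for every $k$.

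\emph{Step 2: pointwise bound on $|z|\le r<1$.} Using the support localization, for $|z|\le r$ we have
$$
\|{\bf G}_n(z)\|_{\C^2}\le\sum_{k=2^{n-1}}^{2^n-1}r^k\le\frac{r^{2^{n-1}}}{1-r}.
$$
Alternatively, Cauchy--Schwarz gives $\|{\bf G}_n(z)\|_{\C^2}\le\big(\sum_{k\ge 2^{n-1}} r^{2k}\big)^{1/2}$. Either bound works, because $\sum_n r^{2^{n-1}}<\infty$. The maximum principle is not even needed here.

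\emph{Step 3: conclusion.} By the $M$-test, the series $\sum_n{\bf G}_n$ converges absolutely and uniformly on $\{|z|\le r\}$, coordinatewise in $\C^2$, for every $r<1$. Each partial sum ${\bf F}_N$ is a polynomial. Uniform limits on compacta of holomorphic functions are holomorphic, so $P,Q\in H(\D)$ and ${\bf F}_N\to{\bf F}$ uniformly on compact subsets.

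There is no real obstacle. The only point needing care is Step 1: we must use the normalization $\|{\bf G}_n\|=1$ on $\T$ rather than the size of ${\bf F}_n$, which grows like $\sqrt{n+1}$. Even that growth would suffice, though, given the lacunary factor $r^{2^{n-1}}$.
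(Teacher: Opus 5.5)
Your proof is correct. It has the same skeleton as the paper's (a Weierstrass $M$-test driven by the lacunary factor $r^{2^{n-1}}$), but you obtain the key pointwise bound on ${\bf G}_n$ differently.

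\textbf{Your route.} You keep only the $L^2$ content of $\|{\bf G}_n\|_{\C^2}=1$ on $\T$: Parseval gives $\|{\bf g}_{n,k}\|_{\C^2}\le 1$. Summing over the dyadic block then yields $\|{\bf G}_n(z)\|_{\C^2}\le r^{2^{n-1}}/(1-r)$ on $|z|\le r$.

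\textbf{The paper's route.} The paper factors ${\bf G}_n(z)=z^{2^{n-1}}{\bf H}_n(z)$ and applies the maximum modulus principle to $z\mapsto\langle {\bf H}_n(z),{\bf u}\rangle_{\C^2}$ for unit vectors ${\bf u}$. This gives $\|{\bf H}_n(z)\|_{\C^2}\le 1$ on all of $\D$, hence the sharper bound $\|{\bf G}_n(z)\|_{\C^2}\le |z|^{2^{n-1}}$.

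\textbf{Comparison.} For this proposition the two bounds are interchangeable, and yours is slightly more elementary; you are right that the maximum principle is not needed here. The paper's bound, however, uses the full pointwise unimodularity on $\T$ rather than its $L^2$ average, and it is reused downstream where your bound is too lossy.
\begin{itemize}
\item In the tail estimate of Proposition~\ref{20260724prop04}, where $1-r\simeq 2^{-N}$, your extra factor $1/(1-r)$ would bound $\sum_{n>N}\|{\bf G}_n(r\zeta)\|_{\C^2}$ only by something of order $2^N$, not $O(1)$. The Cauchy--Schwarz variant still gives order $2^{N/2}$.
\item In Proposition~\ref{20260724prop03}, a coefficient-wise bound on ${\bf G}_n'$ summed over $n$ produces $(1-r)^{-2}$ instead of $(1-r)^{-1}$.
\end{itemize}
So your argument settles this proposition, but the maximum-principle estimate $\sup_{\D}\|{\bf H}_n\|_{\C^2}\le 1$ would still have to be proved later.
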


\begin{proof}
For each $n \ge 1$, write
$$
{\bf G}_n(z)=z^{2^{n-1}} {\bf H}_n(z), \qquad \textrm{where} \qquad {\bf H}_n(z):=\frac{1}{\sqrt{n}} \calR_{2^{n-1}-1} {\bf F}_{n-1}(z). 
$$
We claim that
\begin{equation} \label{20260724eq60}
\left\|{\bf H}_n(z) \right\|_{\C^2} \le 1, \qquad z \in \D. 
\end{equation} 
Indeed, first note that by Proposition \ref{20260724prop01},
$$
\left\|{\bf H}_n(\zeta) \right\|_{\C^2}=\left\|{\bf G}_n(\zeta) \right\|_{\C^2}=1, \qquad \zeta \in \T. 
$$
This means that for any unit vector ${\bf u} \in \C^2$, the polynomial $z \mapsto \langle {\bf H}_n(z), {\bf u} \rangle_{\C^2}$ has boundary modulus on $\T$ at most one. Then by the maximal modulus principle
$$
\left| \langle {\bf H}_n(z), {\bf u} \rangle_{\C^2} \right| \le 1, \qquad z \in \D. 
$$
Taking the supremum in ${\bf u} \in \C^2$ with $\|{\bf u}\|_{\C^2} \le 1$ on both sides of the above estimate yields the desired claim \eqref{20260724eq60}. 

\vspace{0.1cm}

As a consequence of \eqref{20260724eq60}, we have for any $0<R<1$ and $|z|<R$, 

$$
        \sum_{n=1}^{\infty}\|{\bf G}_n(z)\|_{\C^2} \le \sum_{n=1}^\infty |z^{2^{n-1}}| \cdot  \left\|{\bf H}_n(z) \right\|_{\C^2}
        \le
        \sum_{n=1}^{\infty}R^{2^{n-1}}
        <\infty.
$$
Finally, the desired claim follows from the Weierstrass test. 
\end{proof}

Our next goal is to show that the two holomorphic functions $P$ and $Q$ constructed in Proposition \ref{20260724prop02} belong to $\calB$. For this purpose, we need the following version of the Bernstein's theorem. 

\begin{lem} \label{20260724lem01}
If ${\bf V}$ is a $\C^2$-valued polynomial of degree at most $d\ge1$, then
\begin{equation}\label{20260724eq70}
\sup_{z\in\overline\D} \|{\bf V}'(z)\|_{\C^2} \le
        \textnormal{e} \cdot d\sup_{\zeta\in\T} \|{\bf V}(\zeta)\|_{\C^2}.
\end{equation}
\end{lem}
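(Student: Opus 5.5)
The plan is to reduce to the scalar Bernstein inequality by duality, exactly as in the proof of Proposition~\ref{20260724prop02}, and then use the maximum modulus principle to pass from $\T$ to $\overline\D$. The constant $\textnormal{e}$ is generous: the classical Bernstein inequality gives the constant $1$. So I will aim for the sharp estimate $\sup_{\overline\D}\|{\bf V}'\|\le d\sup_\T\|{\bf V}\|$, which implies \eqref{20260724eq70}.

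First I reduce to scalar polynomials. Fix a unit vector ${\bf u}\in\C^2$ and set $p_{\bf u}(z):=\langle {\bf V}(z),{\bf u}\rangle_{\C^2}$. This is a scalar polynomial of degree at most $d$, since ${\bf u}$ enters only through conjugated constants. Moreover $p_{\bf u}'(z)=\langle {\bf V}'(z),{\bf u}\rangle_{\C^2}$. By Cauchy--Schwarz,
$$
\sup_{\zeta\in\T}|p_{\bf u}(\zeta)|\le M:=\sup_{\zeta\in\T}\|{\bf V}(\zeta)\|_{\C^2}.
$$

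Next I apply the scalar Bernstein inequality $\sup_\T|p'|\le d\sup_\T|p|$ to $p_{\bf u}$. If a self-contained argument is wanted, I would use the elementary version with constant $\textnormal{e}$ instead. Write Cauchy's formula for $p_{\bf u}'(z)$ on the circle $|w|=1+1/d$. The growth bound $|p_{\bf u}(w)|\le |w|^d M$ holds for $|w|\ge1$: apply the maximum principle to $w^{-d}p_{\bf u}(w)$ in the exterior of the disc, where it is holomorphic, including at $\infty$. Then for $|z|\le 1$,
$$
|p_{\bf u}'(z)|\le \frac{(1+1/d)^{d+1}M}{1/d}\le \textnormal{e}\,(d+1)M,
$$
which is slightly off. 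Cleaner: for $|z|=1$, take the circle $|w-z|=1/d$. On it $|w|\le 1+1/d$, so $|p_{\bf u}'(z)|\le d\,(1+1/d)^d M\le \textnormal{e}\,dM$. For $|z|<1$, the maximum modulus principle applied to $p_{\bf u}'$ extends this bound to all of $\overline\D$. This recovers exactly the constant $\textnormal{e}\cdot d$, presumably the authors' intended route.

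Finally I take the supremum over unit ${\bf u}$. Since $\|{\bf V}'(z)\|_{\C^2}=\sup_{\|{\bf u}\|=1}|\langle{\bf V}'(z),{\bf u}\rangle|$, we get $\|{\bf V}'(z)\|_{\C^2}\le \textnormal{e}\,dM$ for every $z\in\overline\D$.

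The only delicate point is the exterior growth estimate $|p(w)|\le |w|^dM$ for $|w|>1$. The cleanest way to see it is to apply the maximum principle to the reciprocal polynomial $p^{\dagger,d}$ on $\D$. On $\T$ we have $|p^{\dagger,d}|=|p|$, and $|p(w)|=|w|^d\,|p^{\dagger,d}(1/\overline w)|$.
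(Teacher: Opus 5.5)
Your proposal is correct and is essentially the paper's proof. It uses the same three ingredients: the exterior bound $|p(w)|\le |w|^dM$ for $|w|\ge1$ via the maximum principle for the reversed polynomial, Cauchy's estimate on the circle $|w-z|=1/d$ giving $d(1+1/d)^dM\le \textnormal{e}\,dM$, and duality over unit vectors ${\bf u}\in\C^2$. The only cosmetic difference is that the paper applies the Cauchy estimate directly at every $z\in\overline\D$, since $|w|\le 1+1/d$ still holds on that circle, so your separate maximum-modulus step for $p_{\bf u}'$ is unnecessary.
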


\begin{proof}
First, let $P$ be a scalar-valued polynomial of degree at most $d$ and put
$M:=\sup_{\T}|P|$.  The $w^d P(1/w)$ extends across $w=0$ as a polynomial of degree at most $d$.  Applying the maximum modulus principle to
this polynomial gives
\begin{equation} \label{20260724eq71}
        |P(\xi)|\le|\xi|^d M,
        \qquad |\xi|\ge1.
\end{equation}
For any $z\in\overline\D$, applying Cauchy's differentiation formula on the circle
$|\xi-z|=1/d$  with \eqref{20260724eq71} yields
\begin{align} \label{20260724eq72}
        |P'(z)| &\le \frac{1}{2\pi} \int_{|\xi-z|=\frac{1}{d}} \frac{|P(\xi)|}{|\xi-z|^2} d\xi  \le \frac{1}{2\pi} \int_{|\xi-z|=\frac{1}{d}} \frac{M \max\{1, |\xi|^d\}}{|\xi-z|^2} d\xi  \nonumber \\
        & \le 
        d \left(1+\frac{1}{d} \right)^d M
        \le \textrm{e} \cdot dM, 
\end{align} 
where we have the estimate that $|\xi| \le 1+1/d$ for $\xi$ belonging to the circle $|\xi-z|=1/d$.

Now fix a unit vector ${\bf u}\in\C^2$ and apply the estimate \eqref{20260724eq72} to
$P_{\bf u}(z):=\langle {\bf V}(z), {\bf u} \rangle_{\C^2}$.  Its boundary modulus on $\T$ is at most
$\sup_{\T}\left\|{\bf V}\right\|_{\C^2}$.  Taking the supremum over unit vectors $u$  proves \eqref{20260724eq70}.
\end{proof}

\begin{prop}\label{20260724prop03}
Let ${\bf F}=(P,Q)$ be the $\C^2$-valued holomorphic function constructed in Proposition~\ref{20260724prop02}. Then
$$
\|{\bf F}'(z)\|_{\C^2}\lesssim \frac{1}{1-|z|},\qquad z\in\D.
$$
Consequently, $P,Q\in\calB$.
\end{prop}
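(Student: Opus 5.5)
We differentiate the series of Proposition~\ref{20260724prop02} term by term, so that ${\bf F}'(z)=\sum_{n\ge1}{\bf G}_n'(z)$ locally uniformly in $\D$, and estimate $\|{\bf G}_n'(z)\|_{\C^2}$ in two ways.

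\emph{First bound.} By Proposition~\ref{20260724prop01}(i),(ii), ${\bf G}_n$ is a polynomial of degree at most $2^n-1$, and by (iv) it satisfies $\|{\bf G}_n(\zeta)\|_{\C^2}=1$ on $\T$. Lemma~\ref{20260724lem01} then gives
\[
\sup_{\overline\D}\|{\bf G}_n'\|_{\C^2}\le \textnormal{e}\,2^n .
\]

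\emph{Second bound.} Write ${\bf G}_n(z)=z^{2^{n-1}}{\bf H}_n(z)$ as in the proof of Proposition~\ref{20260724prop02}. This factorization should produce the decay factor $|z|^{2^{n-1}}$ in the derivative. Differentiating,
\[
{\bf G}_n'=2^{n-1}z^{2^{n-1}-1}{\bf H}_n+z^{2^{n-1}}{\bf H}_n'.
\]
We control the two pieces separately. By \eqref{20260724eq60}, $\|{\bf H}_n\|_{\C^2}\le1$ on $\D$. The polynomial ${\bf H}_n$ has degree at most $2^{n-1}-1$ and boundary norm $1$, so Lemma~\ref{20260724lem01} gives $\|{\bf H}_n'\|_{\C^2}\le \textnormal{e}\,2^{n-1}$ (for $n=1$, ${\bf H}_1$ is constant). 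Hence
\[
\|{\bf G}_n'(z)\|_{\C^2}\lesssim 2^n|z|^{2^{n-1}-1},\qquad z\in\D .
\]

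\emph{Summation.} Summing over $n$ yields
\[
\|{\bf F}'(z)\|_{\C^2}\lesssim \sum_{n\ge1}2^n r^{2^{n-1}-1},\qquad r=|z|.
\]
The remaining task is the standard lacunary estimate $\sum_{n}2^n r^{2^{n-1}}\lesssim (1-r)^{-1}$ for $r\ge 1/2$; for $r<1/2$ the sum is trivially bounded. To prove it, compare each term with a block of the geometric series. Since $r^{2^{n-1}}\le r^k$ for $k\in[2^{n-2},2^{n-1}]$, we get
\[
2^n r^{2^{n-1}}\lesssim \sum_{2^{n-2}\le k<2^{n-1}} r^k ,
\]
and these blocks are disjoint in $k$. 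Therefore the whole sum is bounded by $C\sum_k r^k=C/(1-r)$, plus the finitely many small-$n$ terms. The factor $r^{-1}$ coming from the exponent $2^{n-1}-1$ is harmless for $r\ge1/2$.

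This proves $\|{\bf F}'(z)\|_{\C^2}\lesssim (1-|z|)^{-1}$. Since $|P'|,|Q'|\le\|{\bf F}'\|_{\C^2}$ and $1-|z|^2\le 2(1-|z|)$, it follows that $P,Q\in\calB$.

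The main obstacle is obtaining the decay factor $|z|^{2^{n-1}}$ in the derivative bound. The first bound alone, $\textnormal{e}\,2^n$, does not sum. The factorization through ${\bf H}_n$, together with \eqref{20260724eq60} and Lemma~\ref{20260724lem01} applied to ${\bf H}_n$, is what supplies this decay.
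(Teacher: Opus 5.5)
Your proposal is correct and follows the paper's proof essentially step for step. Both use the factorization ${\bf G}_n=z^{2^{n-1}}{\bf H}_n$, apply Lemma~\ref{20260724lem01} to ${\bf H}_n$ to get $\|{\bf G}_n'(z)\|_{\C^2}\le(1+\textnormal{e})2^{n-1}|z|^{2^{n-1}-1}$, and compare with the geometric series in blocks; the paper's bound $\sum_n 2^{n-1}r^{2^{n-1}-1}\le 1+2\sum_{j\ge1}r^j$ is your block argument without the split at $r=1/2$, and your ``first bound'' on ${\bf G}_n'$ is never used.
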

\begin{proof}
For each $n\ge1$, let ${\bf H}_n$ be the $\C^2$-valued polynomial defined as in Proposition \ref{20260724prop02}. By Proposition~\ref{20260724prop01} and \eqref{20260724eq60},
$$
\deg{\bf H}_n\le2^{n-1}-1
\qquad\text{and}\qquad
\sup_{z\in\D}\|{\bf H}_n(z)\|_{\C^2}\le1.
$$
It follows from Lemma~\ref{20260724lem01} that
$$
\sup_{z\in\D}\|{\bf H}_n'(z)\|_{\C^2}
\le\textrm{e}\cdot2^{n-1}.
$$
Therefore, if $0<|z|=r<1$, then
$$
\|{\bf G}_n'(z)\|_{\C^2}
\le2^{n-1}r^{2^{n-1}-1}\|{\bf H}_n(z)\|_{\C^2}
+r^{2^{n-1}}\|{\bf H}_n'(z)\|_{\C^2}
\le(1+\textrm{e})2^{n-1}r^{2^{n-1}-1}.
$$
Moreover, 
$$
\sum_{n=1}^{\infty}2^{n-1}r^{2^{n-1}-1}
\le1+2\sum_{j=1}^{\infty}r^j
=\frac{1+r}{1-r}
\le\frac{2}{1-r}.
$$
For every $0<R<1$, the preceding estimates show that the series $\sum_{n=1}^{\infty}{\bf G}_n'$ converges uniformly on $\{|z|\le R\}$. Hence, 
$$
\|{\bf F}'(z)\|_{\C^2}
\le\sum_{n=1}^{\infty}\|{\bf G}_n'(z)\|_{\C^2}
\le\frac{2(1+\textrm{e})}{1-|z|},
\qquad 0<|z|<1.
$$
Finally, at $z=0$, we have ${\bf G}_1'(0)=(0,1)$ and ${\bf G}_n'(0)=0$ for $n\ge2$, so the same estimate holds after enlarging the absolute constant if necessary. The proof is complete. 
\end{proof}

\bigskip 

\section{Proof of Theorem \ref{Blochmainthm}} \label{Sec03}

In Proposition~\ref{20260724prop03}, we showed that the limiting procedure in Algorithm~\ref{20260724alg01} produces two Bloch functions $P$ and $Q$. In this section, we prove that they satisfy the properties asserted in Theorem~\ref{Blochmainthm}. We divide the proof into two steps.

\subsection{Step I: Proof of Theorem \ref{Blochmainthm} for points near $\T$}

We first need the following proposition.

\begin{prop}\label{20260724prop04}
There exists an absolute constant $A>0$ such that the following holds. Let $1/2<r<1$, let $\zeta\in\T$, and let $N\ge1$ be the unique integer satisfying
$$
2^{-N-1}<1-r\le2^{-N}.
$$
Then
$$
\|{\bf F}(r\zeta)-{\bf F}_N(\zeta)\|_{\C^2}\le A.
$$
In particular, $A$ is independent of $r$, $N$, and $\zeta$.
\end{prop}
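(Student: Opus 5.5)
We split the difference into a head and a tail,
$$
{\bf F}(r\zeta)-{\bf F}_N(\zeta)=\bigl({\bf F}_N(r\zeta)-{\bf F}_N(\zeta)\bigr)+\sum_{n=N+1}^{\infty}{\bf G}_n(r\zeta),
$$
which is legitimate by Proposition~\ref{20260724prop02}. We then bound each piece by an absolute constant, using only the sup bound \eqref{20260724eq60}, the Bernstein-type Lemma~\ref{20260724lem01}, and the degree information in Proposition~\ref{20260724prop01}.

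\emph{The tail.} Write ${\bf G}_n(z)=z^{2^{n-1}}{\bf H}_n(z)$ with $\|{\bf H}_n\|_{\C^2}\le1$ on $\D$. This gives
$$
\Bigl\|\sum_{n>N}{\bf G}_n(r\zeta)\Bigr\|_{\C^2}\le\sum_{n>N}r^{2^{n-1}}=\sum_{k\ge0}r^{2^{N+k}}.
$$
Since $1-r>2^{-N-1}$, we have $r^{2^{N}}\le e^{-2^N(1-r)}\le e^{-1/2}$. Hence $r^{2^{N+k}}\le e^{-2^{k-1}}$, and the sum is bounded by the absolute constant $\sum_{k\ge0}e^{-2^{k-1}}$.

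\emph{The head.} For $n\le N$, we estimate each increment ${\bf G}_n(r\zeta)-{\bf G}_n(\zeta)$ rather than ${\bf F}_N$ as a whole. This avoids having to bound $\sup_{\T}\|{\bf F}_N\|_{\C^2}=\sqrt{N+1}$, which would only give $\sqrt{N}\,2^N(1-r)\simeq\sqrt N$. By Proposition~\ref{20260724prop01}(ii) and (iv), ${\bf G}_n$ has degree at most $2^n-1$ and $\sup_{\T}\|{\bf G}_n\|_{\C^2}=1$. Lemma~\ref{20260724lem01} then gives $\sup_{\overline\D}\|{\bf G}_n'\|_{\C^2}\le \textrm{e}\,2^n$. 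Integrating along the radius from $r\zeta$ to $\zeta$ yields
$$
\|{\bf G}_n(r\zeta)-{\bf G}_n(\zeta)\|_{\C^2}\le \textrm{e}\,2^n(1-r)\le \textrm{e}\,2^{n-N}.
$$
Summing over $1\le n\le N$ gives at most $2\textrm{e}$. The $n=0$ term ${\bf F}_0\equiv(1,0)$ contributes nothing.

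Combining the two pieces, we may take $A=2\textrm{e}+\sum_{k\ge0}e^{-2^{k-1}}$, which is independent of $r$, $N$ and $\zeta$. The key point, and the only real obstacle, is the one noted in the head estimate: a naive Bernstein bound applied to ${\bf F}_N$ loses a factor $\sqrt N$. Telescoping into the blocks ${\bf G}_n$, each of unit size on $\T$ and with dyadically growing degree, produces a geometric sum instead. The tail estimate is then routine, because the lacunary factor $z^{2^{n-1}}$ gives double-exponential decay once $2^{n}(1-r)\gtrsim1$.
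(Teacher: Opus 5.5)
Your proposal is correct and follows essentially the same route as the paper. The paper uses the same head/tail decomposition, the same blockwise Bernstein estimate on each ${\bf G}_n$ (summing to at most $2\textrm{e}$), and the same lacunary tail bound via $r^{2^{n-1}}\le \textrm{e}^{-(1-r)2^{n-1}}$, arriving at the identical constant $A=2\textrm{e}+\sum_{k\ge0}\textrm{e}^{-2^{k-1}}$.
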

\begin{proof}
By the definitions of ${\bf F}$ and ${\bf F}_N$,
\begin{equation} \label{20260725eq01}
{\bf F}(r\zeta)-{\bf F}_N(\zeta)
=\sum_{n=1}^N\bigl({\bf G}_n(r\zeta)-{\bf G}_n(\zeta)\bigr)
+\sum_{n=N+1}^{\infty}{\bf G}_n(r\zeta).
\end{equation} 
For $1\le n\le N$, Proposition~\ref{20260724prop01} gives
$$
\deg{\bf G}_n\le2^n-1
\qquad\text{and}\qquad
\sup_{\xi\in\T}\|{\bf G}_n(\xi)\|_{\C^2}=1,
$$
which, by Lemma~\ref{20260724lem01}, further implies that
$$
\sup_{z\in\overline{\D}}\|{\bf G}_n'(z)\|_{\C^2}\le\textrm{e}\cdot2^n.
$$
Integrating along the radial segment joining $r\zeta$ and $\zeta$, we obtain
$$
\|{\bf G}_n(r\zeta)-{\bf G}_n(\zeta)\|_{\C^2}
\le\textrm{e}(1-r)2^n.
$$
Therefore,
\begin{equation} \label{20260725eq02}
\sum_{n=1}^N\|{\bf G}_n(r\zeta)-{\bf G}_n(\zeta)\|_{\C^2}
\le2\textrm{e}(1-r)2^N
\le2\textrm{e}.
\end{equation} 
On the other hand, recall that 
$$
{\bf G}_n(z)=z^{2^{n-1}}{\bf H}_n(z),
\qquad
\sup_{z\in\D}\|{\bf H}_n(z)\|_{\C^2} \le 1.
$$
Therefore, using $r^s\le \textrm{e}^{-(1-r)s}$ for $0<r<1$ and $s \ge 0$, we have
\begin{align} \label{20260725eq03}
\sum_{n=N+1}^{\infty}\|{\bf G}_n(r\zeta)\|_{\C^2}
&\le\sum_{n=N+1}^{\infty} r^{2^{n-1}}  \le \sum_{n=N+1}^\infty \textrm{e}^{-(1-r)2^{n-1}} \nonumber \\
&= \sum_{n=N+1}^\infty e^{-(1-r)2^N \cdot 2^{n-1-N}} \le \sum_{k=0}^{\infty}\textrm{e}^{-2^{k-1}}<+\infty 
\end{align}

Finally, combining \eqref{20260725eq01}, \eqref{20260725eq02}, and \eqref{20260725eq03}, we deduce that 
\begin{equation} \label{20260725eq04}
\|{\bf F}(r\zeta)-{\bf F}_N(\zeta)\|_{\C^2}
\le2\textrm{e}+\sum_{k=0}^{\infty}\textrm{e}^{-2^{k-1}}.
\end{equation} 
The proof is complete.
\end{proof}

Our first result in this section shows that Theorem \ref{Blochmainthm} holds for points near $\T$. 

\begin{thm}\label{20260724thm01}
There exist $r_0\in(0,1)$ and a constant $c_0>0$ such that
$$
|P(z)|+|Q(z)| \simeq \|{\bf F}(z)\|_{\C^2}
\ge c_0\left(\log\frac{1}{1-|z|}\right)^{1/2}
$$
whenever $r_0\le|z|<1$.
\end{thm}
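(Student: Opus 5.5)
The plan is to combine the exact boundary identity of Proposition~\ref{20260724prop01}(iii) with the radial approximation of Proposition~\ref{20260724prop04}, and then convert the dyadic index $N$ into $\log\frac{1}{1-|z|}$.

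\textbf{The equivalence.} For any $(a,b)\in\C^2$ we have
$$
\|(a,b)\|_{\C^2}\le |a|+|b|\le \sqrt2\,\|(a,b)\|_{\C^2}.
$$
Applied to ${\bf F}(z)=(P(z),Q(z))$, this gives $|P(z)|+|Q(z)|\simeq\|{\bf F}(z)\|_{\C^2}$ with absolute constants. It therefore suffices to bound $\|{\bf F}(z)\|_{\C^2}$ from below.

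\textbf{The lower bound.} Write $z=r\zeta$ with $\zeta\in\T$ and $1/2<r<1$. Let $N\ge1$ be the integer with $2^{-N-1}<1-r\le 2^{-N}$. Proposition~\ref{20260724prop01}(iii) gives exactly $\|{\bf F}_N(\zeta)\|_{\C^2}=\sqrt{N+1}$. By the triangle inequality and Proposition~\ref{20260724prop04},
$$
\|{\bf F}(r\zeta)\|_{\C^2}\ \ge\ \|{\bf F}_N(\zeta)\|_{\C^2}-\|{\bf F}(r\zeta)-{\bf F}_N(\zeta)\|_{\C^2}\ \ge\ \sqrt{N+1}-A,
$$
where $A$ is the absolute constant from that proposition.

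\textbf{Converting to the logarithm.} From $1-r>2^{-N-1}$ we get
$$
\log\frac{1}{1-r}<(N+1)\log 2,
$$
so $\sqrt{N+1}\ge (\log 2)^{-1/2}\bigl(\log\frac{1}{1-r}\bigr)^{1/2}\ge \bigl(\log\frac{1}{1-r}\bigr)^{1/2}$. Now choose $r_0\in(1/2,1)$ close enough to $1$ that the $N$ corresponding to any $r\ge r_0$ satisfies $\sqrt{N+1}\ge 2A$. This is possible because $N\to\infty$ as $r\to1$. Then for $r\ge r_0$,
$$
\sqrt{N+1}-A\ \ge\ \tfrac12\sqrt{N+1}\ \ge\ \tfrac12\Bigl(\log\frac{1}{1-r}\Bigr)^{1/2}.
$$
This gives the statement with $c_0=1/2$ for $\|{\bf F}\|_{\C^2}$. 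For $|P|+|Q|$ the same bound holds, since $|P|+|Q|\ge\|{\bf F}\|_{\C^2}$.

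\textbf{Where the difficulty lies.} The analytic work is entirely contained in Proposition~\ref{20260724prop04}. That result controls both the Bernstein-type tail $\sum_{n\le N}$ and the geometric decay of $\sum_{n>N}{\bf G}_n(r\zeta)$ uniformly in $\zeta$, and this uniformity is the only delicate point. The remaining care here is bookkeeping: $N$ must be at least $1$, which holds because $r>1/2$, and $r_0$ must be chosen so that the additive constant $A$ is absorbed.
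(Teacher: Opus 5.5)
Your proposal is correct and follows the paper's proof essentially step by step. The steps match: the exact identity $\|{\bf F}_N(\zeta)\|_{\C^2}=\sqrt{N+1}$ from Proposition~\ref{20260724prop01}(iii), the triangle inequality with Proposition~\ref{20260724prop04}, a choice of $r_0$ forcing $\sqrt{N+1}\ge 2A$, and the comparison $(N+1)\log 2>\log\frac{1}{1-r}$ from the dyadic choice of $N$. You also make explicit two things the paper leaves implicit, namely the norm equivalence $\|(a,b)\|_{\C^2}\le|a|+|b|\le\sqrt2\,\|(a,b)\|_{\C^2}$ and the constant $c_0=1/2$.
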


\begin{proof}
Let $z=r\zeta$, where $1/2<r<1$ and $\zeta\in\T$, and let $N\ge1$ be the unique integer satisfying
$$
2^{-N-1}<1-r\le2^{-N}.
$$
By Proposition~\ref{20260724prop01},
$$
\|{\bf F}_N(\zeta)\|_{\C^2}=\sqrt{N+1}.
$$
Hence, Proposition~\ref{20260724prop04} and the triangle inequality give
$$
\left|\|{\bf F}(r\zeta)\|_{\C^2}-\sqrt{N+1}\right|
\le\|{\bf F}(r\zeta)-{\bf F}_N(\zeta)\|_{\C^2}
\le A.
$$
Here, $A$ is the constant defined as in Proposition \ref{20260724prop04}; see \eqref{20260725eq04}. Moreover, the choice of $N$ gives
$$
N\log2\le\log\frac{1}{1-r}<(N+1)\log2.
$$
Choose $r_0\in(1/2,1)$ sufficiently close to $1$ so that $\sqrt{N+1}\ge2A$ whenever $r_0\le r<1$. Then
$$
\|{\bf F}(r\zeta)\|_{\C^2} \ge\frac{1}{2}\sqrt{N+1}.
$$
Since $\sqrt{N+1}$ is comparable to $\left(\log\frac{1}{1-r}\right)^{1/2}$, the desired estimate follows.
\end{proof}

Note that Theorem~\ref{20260724thm01} already provides a satisfactory answer to Open Question~\ref{20260724op01}.

\begin{cor} \label{20260725cor01}
    There exist three functions $f_1, f_2, f_2 \in \calB$ such that
    $$
    |f_1(z)|+|f_2(z)|+|f_3(z)| \gtrsim \left(\log \frac{1}{1-|z|} \right)^{1/2}, \qquad z \in \D. 
    $$
\end{cor}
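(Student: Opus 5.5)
Theorem~\ref{20260724thm01} already gives the bound on the annulus $r_0\le|z|<1$, so the plan is to take $f_1:=P$, $f_2:=Q$ and use a third Bloch function only to cover the remaining disc $|z|<r_0$. I will take $f_3$ to be a large constant, $f_3:=C_3$, where $C_3>0$ is to be fixed below. Constants are trivially in $\calB$: their derivative vanishes, so $\|f_3\|_{\calB,*}=0$. Also $P,Q\in\calB$ by Proposition~\ref{20260724prop03}.

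\textbf{Annulus $r_0\le|z|<1$.} The norms $\|\cdot\|_{\C^2}$ and $|u_1|+|u_2|$ are equivalent on $\C^2$, namely
$$
\|u\|_{\C^2}\le|u_1|+|u_2|\le\sqrt2\,\|u\|_{\C^2}.
$$
Combining the left inequality with Theorem~\ref{20260724thm01}, I get
$$
|P(z)|+|Q(z)|+C_3\ \ge\ \|{\bf F}(z)\|_{\C^2}\ \ge\ c_0\left(\log\frac{1}{1-|z|}\right)^{1/2}.
$$

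\textbf{Disc $|z|<r_0$.} Since $|z|<r_0$, we have $\log\frac{1}{1-|z|}<\log\frac{1}{1-r_0}$, so the right-hand side of the desired inequality is bounded there. Choosing
$$
C_3:=\left(\log\frac{1}{1-r_0}\right)^{1/2}
$$
gives $|f_1|+|f_2|+|f_3|\ge C_3\ge\left(\log\frac{1}{1-|z|}\right)^{1/2}$ on this disc.

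Putting the two regions together yields the stated estimate with implicit constant $\min\{c_0,1\}$. There is no real obstacle: the only point needing care is that $\log\frac{1}{1-|z|}$ is bounded on the compact disc $\{|z|\le r_0\}$, which lets a constant function absorb the inner region. The genuine difficulty, removing the third function, is not part of this corollary. For that one would need a separate argument that $|P|+|Q|$ does not vanish inside $|z|<r_0$, or a modification of $P$ and $Q$ that ensures this.
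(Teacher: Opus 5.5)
Your proof is correct and is essentially the paper's argument: $f_1=P$, $f_2=Q$, and a constant $f_3$ covering the inner disc, where $\left(\log\frac{1}{1-|z|}\right)^{1/2}$ is bounded. The only difference is that the paper takes $f_3=1$ and lets the implicit constant in $\gtrsim$ absorb $\left(\log\frac{1}{1-r_0}\right)^{1/2}$, whereas you scale the constant so that the implied constant is explicitly $\min\{c_0,1\}$.
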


\begin{proof}
The desired corollary follows by taking $f_1=P$, $f_2=Q$, and $f_3=1$, where $P$ and $Q$ are the Bloch functions constructed in Theorem~\ref{20260724thm01}.
\end{proof}

\subsection{Step II: Proof of Theorem \ref{Blochmainthm} for all points in $\D$}

In the second part of this section, we modify the construction used in Theorem~\ref{20260724thm01} to extend its conclusion to every point of $\D$, thereby completing the proof of Theorem~\ref{Blochmainthm}. We need the following elementary lemma. 

\begin{lem}\label{20260724lem02}
Let $0<\rho<1$ and $\D_\rho:=\{z\in\C:|z|\le\rho\}$. If ${\bf F}:\D\to\C^2$ is holomorphic, then, for every $\varepsilon>0$, there exists $a\in\C^2$ such that
$$
\|a\|_{\C^2}<\varepsilon
\qquad\text{and}\qquad
0\notin({\bf F}+a)(\D_\rho).
$$
\end{lem}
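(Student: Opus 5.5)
The plan is to show that the image ${\bf F}(\D_\rho)$ is a ``small'' subset of $\C^2$: it has zero four-dimensional Lebesgue measure. Once this is known, the set $-{\bf F}(\D_\rho)$ cannot contain the ball $\{a\in\C^2:\|a\|_{\C^2}<\varepsilon\}$, which has positive measure. So there is some $a$ with $\|a\|_{\C^2}<\varepsilon$ and $-a\notin{\bf F}(\D_\rho)$, which is exactly $0\notin({\bf F}+a)(\D_\rho)$.

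To prove the measure-zero statement, I will use that ${\bf F}$ is holomorphic on a neighborhood of the compact disc $\D_\rho$. In particular it is Lipschitz there: put $L:=\sup_{|z|\le\rho}\|{\bf F}'(z)\|_{\C^2}<\infty$, which is finite by continuity of ${\bf F}'$ on the compact set. Then
$$
\|{\bf F}(z)-{\bf F}(w)\|_{\C^2}\le L|z-w|, \qquad z,w\in\D_\rho,
$$
by integrating ${\bf F}'$ along the segment from $w$ to $z$, which stays in the convex set $\D_\rho$.

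Now fix $\delta>0$ and cover $\D_\rho$ by $O(\delta^{-2})$ squares of side $\delta$. The image of each square lies in a ball in $\C^2\cong\R^4$ of radius $\sqrt2\,L\delta$, whose volume is $O(\delta^4)$. Hence the outer measure of ${\bf F}(\D_\rho)$ is $O(\delta^{2})$, and letting $\delta\to0$ shows that ${\bf F}(\D_\rho)$ is Lebesgue null in $\R^4$. The ball of radius $\varepsilon$ has positive measure, so it contains a point of the complement of $-{\bf F}(\D_\rho)$. This gives the required $a$.

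There is no real obstacle here. The only point needing care is that the Lipschitz bound is taken on the closed disc $\D_\rho$ with $\rho<1$, where ${\bf F}'$ is bounded; this is why the lemma restricts to $\D_\rho$ rather than all of $\D$. The underlying reason the argument works is simply the dimension gap: a real two-dimensional Lipschitz image cannot fill an open set in real dimension four.
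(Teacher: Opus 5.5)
Your proposal is correct and follows essentially the same route as the paper: Lipschitz continuity of ${\bf F}$ on $\D_\rho$ makes ${\bf F}(\D_\rho)$ Lebesgue-null in $\C^2\cong\R^4$, so the $\varepsilon$-ball cannot lie inside $-{\bf F}(\D_\rho)$. The only difference is that you prove the null-measure claim directly, by covering $\D_\rho$ with $O(\delta^{-2})$ squares of side $\delta$, whereas the paper cites the fact that Lipschitz maps do not increase Hausdorff dimension.
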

\begin{proof} 
Since ${\bf F}$ is continuously differentiable on a neighborhood of $\D_\rho$, it is Lipschitz on $\D_\rho$ as a map from $\R^2$ to $\R^4$. Since Lipschitz maps do not increase Hausdorff dimension,
$$
\dim_{\mathrm H}{\bf F}(\D_\rho)
\le\dim_{\mathrm H}\D_\rho
=2<4.
$$
Thus ${\bf F}(\D_\rho)$ has zero four-dimensional Lebesgue measure and hence empty interior in $\C^2$. Therefore, for every $\varepsilon>0$, we may choose
$$
a\in\{w\in\C^2:\|w\|_{\C^2}<\varepsilon\}\setminus\bigl(-{\bf F}(\D_\rho)\bigr).
$$
Then ${\bf F}(z)+a\ne0$ for every $z\in\D_\rho$.
\end{proof}

\begin{proof}[Proof of Theorem~\ref{Blochmainthm}]
Let ${\bf F}=(P,Q)$ be the $\C^2$-valued holomorphic function constructed in Theorem \ref{20260724thm01}. Therefore, we may choose $\rho\in(0,1)$ sufficiently close to $1$ so that
$$
\|{\bf F}(z)\|_{\C^2}\ge2,\qquad \rho\le|z|<1.
$$
Applying Lemma~\ref{20260724lem02} with $\varepsilon=1$, we obtain $a\in\C^2$ such that
$$
\|a\|_{\C^2}<1
\qquad\text{and}\qquad
0\notin({\bf F}+a)(\D_\rho).
$$
Set
$$
\widetilde{\bf F}:={\bf F}+a=:(\widetilde P,\widetilde Q).
$$
It is clear that $\widetilde P,\widetilde Q\in\calB$
Since adding constants does not change the Bloch seminorm.

If $\rho\le|z|<1$, then
$$
\|\widetilde{\bf F}(z)\|_{\C^2}
\ge\|{\bf F}(z)\|_{\C^2}-\|a\|_{\C^2}
\ge\frac12\|{\bf F}(z)\|_{\C^2} \gtrsim \left(\log \frac{1}{1-|z|} \right)^{1/2}.
$$
On the other hand, $\widetilde{\bf F}$ does not vanish on $\D_\rho$. Hence, by continuity and compactness,
$$
\min_{z\in \D_\rho}\|\widetilde{\bf F}(z)\|_{\C^2}>0.
$$
Hence 
$$
\|\widetilde{\bf F}(z)\|_{\C^2}
\gtrsim \left(\log\frac{\textrm{e}}{1-|z|}\right)^{1/2},
\qquad z\in\D_\rho.
$$
Combining the two estimates above and rescaling the functions by a suitable absolute constant completes the proof of Theorem~\ref{Blochmainthm}.
\end{proof}

\end{document}